\theoremstyle{plain}
  \newtheorem{main-theorem}{\bf Theorem}
  \newtheorem{theorem}{\bf Theorem}[section]
  \newtheorem{proposition}[theorem]{\bf Proposition}
  \newtheorem{lemma}[theorem]{\bf Lemma}
\theoremstyle{remark}
\DeclareMathOperator*{\supp}{supp} 
\newcommand{\RR}{\mathbb{R}} 
\newcommand{\NN}{\mathbb{N}} 
\newcommand{\EE}{\mathbb{E}}  
\newcommand{\PP}{\mathbb{P}} 
\newcommand{\WW}{\mathbb{W}} 
\newcommand{\dd}{\mathrm{d}}
\newcommand{\norm}[1]{\lVert#1\rVert} 
\newcommand{\br}[1]{\langle #1 \rangle } 
\title[Observability with noise]{The observational limit of wave packets with noisy measurements}
\author[P. Caro]{Pedro  Caro}
\address{Pedro Caro, BCAM- Basque Center of Applied Mathematics}
\email{pcaro@bcamath.org}
\author[C. J. Meroño]{Cristóbal J. Meroño}
\address{Cristóbal J. Meroño, Universidad Politécnica de Madrid, ETSI Caminos, Departmento de Matemática e Informática}
\email{cj.merono@upm.es}
\date{\today}
\keywords{Random noise, pseudodifferential operators, filtering noise, ergodicity.}
\thanks{PC is supported by BERC 2018-2021, Severo Ochoa SEV-2017-0718, Ikerbasque and PGC2018-094528-B-I00. CJM was supported by Spanish Grant MTM2017-85934-C3-3-P and wants to thank BCAM for its support and hospitality.}
\begin{document}

\begin{abstract}
The authors consider the problem of recovering an observable from 
certain measurements containing random errors. The observable is 
given by a pseudodifferential operator while the random errors are 
generated by a Gaussian white noise. The authors show how wave 
packets can be used to partially recover the observable from the 
measurements almost surely. Furthermore, they point out the 
limitation of wave packets to recover the remaining part of the 
observable, and show how the errors hide the signal coming from the 
observable. The recovery results  are 
based on an ergodicity property of the errors produced by wave 
packets.
\end{abstract}

\maketitle

\section{Introduction and main results} \label{sec_intro}

We study the problem of recovering an \textit{observable} $P$ from certain 
\textit{measurements} $\mathcal{N}_P$ that contain some random 
errors. 
In our case, the observable $P$ is 
described by a pseudodifferntial operator
\begin{equation} \label{eq:def_P}
P f (x) = \frac{1}{(2\pi)^{d/2}} \int_{\RR^d} e^{i x \cdot \xi} a(x,\xi) \widehat{f}(\xi) \, \dd \xi,
\end{equation}
with a classical symbol $a$ of order $m \in \RR$. Here $d \geq 1$ and $\widehat{f}$ 
denotes the Fourier transform of $f$ defined as
$\widehat{f}(\xi) = 1/(2\pi)^{d/2}\int_{\RR^d} e^{-i\xi\cdot x} f(x) \,\dd x$.
The measurements $\mathcal{N}_P$ consists of the ideal data or \textit{signal} yielded by $P$ 
plus a centered complex Gaussian variable modeling the random error. More precisely, 
for arbitrary \textit{states} $f$ and $g$, the measurements are given by
\[\mathcal{N}_P (f, g) = \int_{\RR^d} \overline{f} P g \, + \mathcal{E}(\overline{f}, g),\]
where $\mathcal{E}(\overline{f}, g)$ is a complex Gaussian with zero mean, and 
variance depending on the states $f$ and $g$. For the purpose of this 
introduction, we think of the random error given as follows ---a more 
complete description can be found in the \cref{sec_noise}. Let $(\Omega, \mathcal{F}, \PP)$ be a probability space so that there 
exists a countable family $\{ X_\alpha : \alpha \in \NN^2 \}$ of independent 
complex Gaussian variables satisfying
\begin{equation} \label{eq:complexGaussian}
\EE X_\alpha = 0,  \qquad \EE (\overline{X}_\alpha X_\alpha) = 1.
\end{equation}
Then, the \textit{error} $\mathcal{E} = \mathcal{E}_\beta $, contained in the measurements $\mathcal{N}_P = \mathcal{N}_{\beta, P} $, can be expressed as
\begin{equation}
\mathcal{E}_\beta (f, g) = \sum_{\alpha \in \NN^2} (f|e_{\alpha_1})_\beta (g|e_{\alpha_2})_\beta X_\alpha,
\label{id:error}
\end{equation}
where $\alpha = (\alpha_1, \alpha_2)$, $\{ e_n : n \in \NN \}$ is an 
orthonormal basis of the Sobolev space $H^\beta (\RR^d)$ with $\beta \in \RR$ and 
$(\centerdot|\centerdot)_\beta$ denotes its inner product,
\begin{equation} \label{eq:inner_p}
(f|g)_\beta = \int_{\RR^d} (1 + |\xi|^2)^\beta \overline{\widehat{f}(\xi)} \widehat{g}(\xi) \, \dd \xi.
\end{equation}
As usual, $\norm{f}_{\beta}^2 = (f|f)_\beta$. For convenience, if $\beta = 0$, we simplify the notation so that the inner product is denoted by $(\centerdot|\centerdot)$ and the norm by $\norm{\centerdot}$.
In the \cref{sec_noise} we shall show that
\begin{equation} \label{eq:varianza}
\EE \, \mathcal E_\beta(f,g) = 0 \quad \textnormal{and} \quad  \EE \left | \mathcal E_\beta(f,g) \right |^2 = \norm{f}_\beta^2 \norm{g}_\beta^2.
\end{equation}
Note that different values of $\beta$ might correspond to different variances of the error $\mathcal{E}_\beta$. This can be interpreted as follows. If $\beta = 0$ the oscillations of the states $f$ and 
$g$ do not affect the size of the error, only their masses influence 
its size; while for $\beta>0$ the noise can be increased in the 
cases where the states $f$ and $g$ present many oscillations. When 
$\beta<0$ the oscillations of the states might reduce the size of 
the noise.

For simplicity, we only consider observables $P$ so that their symbols accept 
an expansion in terms of homogeneous functions. More precisely, we consider 
an observable $P$ so that its symbol $a$ of order $m$ satisfies that
\begin{equation} \label{eq:ps_1}
a \sim \sum_{j=1}^\infty a_j,
\end{equation}
with $a_j$ being a classical symbol of order $m_j \in \RR$, for
$m_j < m_{j-1} < \dots < m_1 = m$, which is \textit{homogeneous in the variable $\xi$}, that is,  
\begin{equation} \label{eq:ps_2}
a_j(x,t\xi) = t^{m_j} a_j(x,\xi),  \quad \textnormal{whenever } |\xi|\ge 1/2  \textnormal{ and } t\ge 1.
\end{equation}
Note that such an expansion  is not unique ---one can choose any smooth extension of the $a_j(x,\xi)$ for $|\xi|\le 1/2$.  In the \cref{sec_deterministic}, we provide more details 
about classical symbols and the exact meaning of the expansion 
\eqref{eq:ps_1}. It can be convenient for the reader to note that differential operators of 
any order with smooth coefficients are observable satisfying these 
properties.

Note that recovering the symbol $a$ 
of a pseudodifferential operator $P$ is an easy task: if one extends 
the operator $P$ to the tempered distributions in $\RR^d$, one can 
verify that \textit{plane waves} $e^{ix\cdot \xi}$ yield the symbol, $  e^{-ix\cdot\xi} P(e^{ix\cdot\xi}) = a(x,\xi)$. In the case of a 
differential 
operator
$P = \sum_{|\alpha|\leq m} a_\alpha(x) (-i)^{|\alpha|} \partial_x^\alpha, $
this is straightforward $e^{-ix\cdot \xi} P (e^{ix\cdot \xi}) = \sum_{|\alpha|\leq m} a_\alpha(x) \xi^\alpha $.
This suggests 
that, to recover the observable in the presence of a random noise, one needs 
to approximate plane waves with functions in $H^\beta(\RR^d)$. The problem is 
that this cannot be achieved with sequences that  are uniformly bounded in 
$H^\beta(\RR^d)$ since no Sobolev space in $\RR^d$ contains the plane 
waves. This seems a relevant point since, if the sequence is not bounded, by 
\eqref{eq:varianza}, it looks unlikely that we can filter the noise. 
Fortunately, there is a way around this at least when the symbol has an 
homogeneous expansion as in \eqref{eq:ps_1} and \eqref{eq:ps_2}. Indeed, it is enough to 
introduce a family of states $\{ f_t : t \geq 1 \}$
that concentrate around a point in $\RR^d$ as $t$ grows, and oscillate at a higher order 
than the rate of concentration, namely, wave packets.

At this point, we are ready to state the main results of this 
article. The first theorem consists of several
reconstruction formulas that will be used recursively to obtain the 
expansion of the observable $P$ up to some extent. Let us first state 
this first theorem and then we explain how to use these formulas to obtain the 
expansion.

\begin{main-theorem} \label{thm_main_1} \sl Let $P$ be an observable whose symbol $a$ of order $m$ accepts an homogeneous expansion as in \eqref{eq:ps_1}, and $Q_j$ denote the pseudodifferential operators with symbols $a_j$. Let $\beta \in \RR$ be the value in the error $\mathcal{E}_\beta$ and assume that $m > 2\beta - 1/2$. Set
\[k_\beta = \min \{ j \in \NN : m_{j+1} \leq 2\beta - 1/2 \} \textnormal{ and } j_\beta = \min \{ j \in \NN \cup \{0 \} : m_{j+1} \leq 2\beta \},\]
and note that $k_\beta \geq 1$ while $0 \leq j_\beta \leq k_\beta$.
For every $x_0, \xi_0 \in \RR^d$ with $|\xi_0| = 1$, $\lambda > 1$ and $t \geq 1$, set the \textit{wave packets}
\[f_{t,\lambda}(x) =   t^{d/2}\chi(t(x-x_0))e^{it^{\lambda} (x-x_0)\cdot \xi_0},\]
where $\chi$ is a function in the Schwartz class satisfying \eqref{eq:chi_properties} in the \cref{sec_deterministic}.
Assuming that we know $\beta$ and the orders $\{ m_1 \dots m_{k_\beta} \}$, we have:
\begin{enumerate}[label=(\alph*)]
\item \label{lim:non-averaging} Whenever $j_\beta \geq 1 $, set $\lambda_j = \max (1/(m_j - m_{j+1}), 2)$ if $1 \leq j < j_\beta$, and $\lambda_{j_\beta} \geq \max (1/(m_{j_\beta} - 2\beta), 2)$. Then,
\[\lim_{N \to \infty} N^{-\lambda_j m_j} \big[\mathcal{N}_{\beta, P} (f_{N, \lambda_j}, f_{N, \lambda_j}) - \sum_{0<k<j} (f_{N, \lambda_j}| Q_k f_{N, \lambda_j}) \big] = a_j (x_0, \xi_0) \]
almost surely for every $j \in \{ 1, \dots, j_\beta \}$.
\item \label{lim:averaging} Set $\lambda_j = \max (1/(m_j - m_{j+1}), 2)$ if $j_\beta < j < k_\beta$, and $\lambda_{k_\beta} > \max (1/(m_{k_\beta} - 2\beta + 1/2), 2)$. Then,
\[\lim_{N \to \infty} \frac{1}{N} \int_N^{2N} t^{-\lambda_j m_j} \big[\mathcal{N}_{\beta, P} (f_{t, \lambda_j}, f_{t, \lambda_j}) - \sum_{0<k<j} (f_{t, \lambda_j}| Q_k f_{t, \lambda_j}) \big] \, \dd t = a_j (x_0, \xi_0) \]
almost surely whenever $0 \leq j_\beta < j \leq k_\beta$.
\end{enumerate}
\end{main-theorem}

To explain how to use this formulas to obtain the expansion, assume for instance that the observable $P$ is so that $m > m_{j_\beta} > 2 \beta \geq m_{k_\beta} > 2 \beta - 1/2$. Choosing $\lambda_1$ as described in \ref{lim:non-averaging},
we compute $a_1$ from
\[\lim_{N \to \infty} N^{-\lambda_1 m_1} \big[\mathcal{N}_{\beta, P} (f_{N, \lambda_1}, f_{N, \lambda_1}) \big] = a_1 (x_0, \xi_0), \]
which is a limit that converges almost surely. Combining this with the homogeneity property \eqref{eq:ps_2} one essentially obtains $a_1 (x, \xi) $  for all $x \in \RR$ and $|\xi|\ge 1/2$, to define the values of $a_1 (x, \xi) $ when $|\xi| \le 1/2$, one can choose any smooth extension. Once we know $a_1$, we construct $Q_1$ and compute $a_2$ from
\[\lim_{N \to \infty} N^{-\lambda_2 m_2} \big[\mathcal{N}_{\beta, P} (f_{N, \lambda_2}, f_{N, \lambda_2}) -  (f_{N, \lambda_2}| Q_1 f_{N, \lambda_2}) \big] = a_2 (x_0, \xi_0), \]
whose convergence is almost surely. Iterating the process ---still applying \ref{lim:non-averaging}--- we obtain $a_j$ with $j \leq j_\beta$.  Note that the choices of $\lambda_1, \dots, \lambda_{j_\beta}$ are only based on the a priori knowledge of $\beta$ and $ \{m_1, \dots, m_{j_\beta} \}$. In order to compute $a_{j_\beta + 1}$, we use the limit in \ref{lim:averaging}:
\[\lim_{N \to \infty} \frac{1}{N} \int_N^{2N} t^{-\lambda_{j_\beta + 1} m_{j_\beta + 1}} \big[\mathcal{N}_{\beta, P} (f_{t, \lambda_{j_\beta + 1}}, f_{t, \lambda_{j_\beta + 1}}) - \sum_{k=1}^{j_\beta} (f_{t, \lambda_{j_\beta + 1}}| Q_k f_{t, \lambda_{j_\beta + 1}}) \big] \, \dd t, \]
which again converges almost surely to $a_{j_\beta + 1} (x_0, \xi_0)$. Iterating the process ---applying now \ref{lim:averaging}--- we obtain $a_j$ for $j_\beta < j \leq k_\beta$. Again, the choices of $\lambda_j$ with $j \in \{ {j_\beta + 1}, \dots, {k_\beta} \}$ are only based on the a priori knowledge of $\beta$ and the orders $ \{m_{j_\beta + 1}, \dots, m_{k_\beta} \}$. This completes the iteration to construct $a_1, \dots, a_{k_\beta}$ in the situation where the observable $P$ satisfies $m > m_{j_\beta} > 2 \beta \geq m_{k_\beta} > 2 \beta - 1/2$. Note that, in the case where $j_\beta = 0$, we would only use \ref{lim:averaging} to recover $a_1, \dots, a_{k_\beta}$ in the expansion.

It is worth to pay 
attention to the case where the observable $P$ is a differential 
operator of order $m$, in that case it is possible to recover the 
full $P$ from $\mathcal{N}_{\beta, P}$ with $\beta < 1/4$. This means 
that, even with an error $\mathcal{E}_\beta$ that gets amplified with 
the oscillations of the states,
one can obtain the full observable. Furthermore, in the 
case that $P$ is elliptic ---the leading order term of its symbol 
satisfies that $\sum_{|\alpha| = m} a_\alpha(x) \xi^\alpha \neq 0$ 
for all $\xi \in \RR^d \setminus \{ 0 \}$,  one can construct a 
suitable parametrix 
from the measurements $\mathcal{N}_{\beta, P}$ with $\beta < 1/4$.

\Cref{thm_main_1} shows that wave packets are appropriate states to 
obtain part of the expansion of the observable $P$. Furthermore, as 
we will show in the \cref{sec_deterministic}, they are suitable to 
recover the full expansion in the case of ideal data, that is, in the 
absence of $\mathcal{E}_\beta$. However, in the presence of noise, we 
cannot recover $a_j$ without uncertainty for any $j > k_\beta$ since the variance of the errors becomes so large that the signal produced by the observable is lost in the noise. This 
claim is actually the statement of the second theorem of this paper. 
For convenience, introduce the following notation
\begin{equation}
\mathcal{N}_{\beta, P_j} (f,g) = \mathcal{N}_{\beta, P} (f,g) - \sum_{0<k<j} (f| Q_k g) \textnormal{ for } j \in \NN,
\label{eq:Nbj}
\end{equation}
where $Q_j$ is as in \Cref{thm_main_1}.

\begin{main-theorem} \label{thm_main_2} \sl
Let $\beta \in \RR$ be given.
\begin{enumerate}[label=(\alph*)]
\item \label{eq:noconv1} When $m_j \le 2\beta$ there is no $\lambda \in (1, \infty)$ such that the sequence
$$ \{ N^{- \lambda m_j} \mathcal{N}_{\beta, P_j}(f_{N,\lambda},f_{N,\lambda}) : N \in \NN \} $$
converges to $a_j(x_0, \xi_0)$ in probability. In fact, if $m_j < 2\beta$, then for every $\lambda \in (1,\infty)$ and every $ c>0$,
\begin{equation*} 
\lim_{t \to \infty} \PP \left \{  \left |t^{- \lambda m_j} \mathcal N_{\beta,P_j}(f_{t,\lambda},f_{t,\lambda})    - a_j(x_0,\xi_0) \right | >c \right \} = 1.
\end{equation*}
\item \label{eq:noconv2} When $m_j \le 2\beta -1/2$ there is no $\lambda \in (1, \infty)$ such that the sequence
\[ \left\{ \frac{1}{N} \int_{N}^{2N}   t^{- \lambda m_j} \mathcal N_{\beta,P_j}(f_{t,\lambda},f_{t,\lambda})  \,  \dd t : N \in \NN \right\} \]
converges to $a_j(x_0, \xi_0)$ in probability.
Actually, for every $\lambda \in (1,\infty)$ and every $ c>0$,
\begin{equation*} 
\lim_{T \to \infty} \PP \left \{  \left | \frac{1}{T} \int_{T}^{2T}   t^{- \lambda m_j} \mathcal N_{\beta,P_j}(f_{t,\lambda},f_{t,\lambda})  \,  \dd t  - a_j(x_0,\xi_0) \right | >c  \right \} = 1.
\end{equation*}
\end{enumerate}
\end{main-theorem}

Finally, we state the third main result of our paper, which provides the rate of convergence in probability of the limits in \Cref{thm_main_1}.
\begin{main-theorem}\label{thm_main_3} \sl Under the same assumptions of \Cref{thm_main_1} and adopting the notation in \eqref{eq:Nbj}, we have:
\begin{enumerate}[label=(\alph*)]
\item \label{in:non-averaging} Whenever $j_\beta \geq 1 $, set $\lambda_j = \max (1/(m_j - m_{j+1}), 2)$ if $1 \leq j < j_\beta$, and $\lambda_{j_\beta} > \max (1/(m_{j_\beta} - 2\beta), 2)$. Then, for all $\varepsilon > 0$ and $\delta \in (0, 1]$, there exists $N_0$ such that for all $N \geq N_0$
\[\PP \left \{  \left |N^{- \lambda m_j} \mathcal N_{\beta,P_j}(f_{N,\lambda},f_{N,\lambda})    - a_j(x_0,\xi_0) \right | \leq \varepsilon \right \} \geq 1 - \delta.\]
Here $N_0 = C \max (1/\varepsilon, (\log 1/\delta)^{1/\theta})$ with $\theta > 0$ and $C \geq 1$ explicit and only depending on $d$, $\{ m_1, \dots, m_{k_\beta}\}$ and $ \beta $.
\item \label{in:averaging} Set $\lambda_j = \max (1/(m_j - m_{j+1}), 2)$ if $j_\beta < j < k_\beta$, and $\lambda_{k_\beta} > \max (1/(m_{k_\beta} - 2\beta + 1/2), 2)$. Then,
for all $\varepsilon > 0$ and $\delta \in (0, 1]$, there exists $N_0$ such that for all $N \geq N_0$
\[\PP \left \{  \left | \frac{1}{N} \int_{N}^{2N}   t^{- \lambda m_j} \mathcal N_{\beta,P_j}(f_{t,\lambda},f_{t,\lambda})  \,  \dd t  - a_j(x_0,\xi_0) \right | \leq \varepsilon  \right \} \geq 1 - \delta.\]
Here $N_0 = C \max (1/\varepsilon, (\log 1/\delta)^{1/\theta})$ with $\theta > 0$ and $C \geq 1$ explicit and only depending on $d$, $\{ m_1, \dots, m_{k_\beta}\}$ and $ \beta $.
\end{enumerate}
\end{main-theorem}

The proof of the part \ref{lim:averaging} of \Cref{thm_main_1} is based on an ergodicity property for $t^{- \lambda m} \mathcal  E_\beta(\overline{f_{t, \lambda}}, f_{t, \lambda})$ (see the \cref{lemma:ergodic}). To show the ergodicity, we have to compute the covariance of
\[\frac{1}{T} \int_{T}^{2T} t^{- \lambda m} \mathcal  E_\beta(\overline{f_t}, f_t)   \, \dd t,\]
which is reduced to analyse an oscillatory integral and isolate 
appropriately the stationary points.

The problem we study in this article and our results are of interest by themselves. Think for 
example of the fact that one can construct a parametrix for an elliptic differential 
operator in the presence of noise. The problem also has 
connections with quantum mechanics, where the observables are
self-adjoint operators whose symbols are classical observables (functions in the phase space). Furthermore, if a unit vector $\psi$ in a Hilbert space describes the state of a quantum system, the measurement $\langle \psi, P \psi \rangle$
corresponds to the expected value of the classical observable $a$ ---here $P$ is the quantum observable and $a$ its symbol. As the reader can guess, we have borrowed the terms observables, measurements, states, plane waves and wave packets from quantum mechanics. 

Despite these interesting connections,
our motivation originates in the Calder\'on 
problem. This inverse problem consists in recovering the conductivity of a medium from suitable 
electrical superficial data (voltage and current on the surface of the medium). In 
technical terms, the conductivity is described by a positive function $\gamma$ defined in a 
bounded domain $D \subset \RR^d$ with $d \geq 2$. The boundary data is modeled by the so-called 
Dirichlet-to-Neumann map $\Lambda_\gamma$, which is an operator that maps the voltage $f$ on $\partial D$, the boundary of $D$, to the current through the boundary
$\Lambda_\gamma f = \gamma \partial_\nu u|_{\partial D}$. Here $u$ is the electric potential generated by $f$, 
that is, the solution of the Dirichlet problem
\[
\left\{
\begin{aligned}
& \nabla \cdot (\gamma \nabla u) = 0 \textnormal{ in } D,\\
& u|_{\partial D} = f,
\end{aligned}
\right.
\]
and $\partial_\nu$ denotes the partial derivative in the direction given by the exterior 
unit normal vector on $\partial D$. Thus, the Calder\'on problem consists in deciding if 
$\gamma$ is uniquely determined by $\Lambda_\gamma$, and if it is so, reconstructing the 
conductivity from the boundary data. This inverse problem has received a lot of attention 
since Calder\'on formulated it in \cite{zbMATH05684831}. Some important contributions on this 
problem are \cite{zbMATH04015323}, \cite{zbMATH00854849}, \cite{zbMATH}, 
\cite{zbMATH05050053}, \cite{zbMATH06145493} and \cite{zbMATH06534426}.

Since the Dirichlet-to-Neumann map contains physical data about the voltage and the current on the boundary,
one assumes that this information has been obtained after some measurement procedure. In applications, these 
measurements will contain errors. In \cite{CalderonCorrupted}, the authors assumed 
that the available data had been corrupted in the process of measurements so that they only 
have access to
\[ \mathcal{N}(f, g) = \int_{\partial D} \Lambda_\gamma f g \, + \mathcal{E}_0 (f, g),\]
where the Hilbert space involved in the definition of the error $\mathcal{E}_0$ is $L^2(\partial D)$ ---which explains the subindex 
$0$. The authors of \cite{CalderonCorrupted} showed that from the 
measurements $\mathcal{N}$ one can reconstruct $\gamma$ and $\partial_\nu \gamma$ on $\partial D$ almost surely. Their approach 
is based on a method proposed by Brown \cite{zbMATH01731190}, and 
later developed in collaboration with Salo \cite{zbMATH05045048}, to 
reconstruct the unknown on the boundary assuming full knowledge of 
ideal data. This method uses a family of solutions whose boundary 
data remind to wave packets. The formula, given in 
\cite{CalderonCorrupted}, to reconstruct the conductivity on the 
boundary from $\mathcal{N}$ is a limit in the spirit of part 
\ref{lim:non-averaging} in \Cref{thm_main_1}. However, to 
reconstruct its normal derivative on the boundary, the formula 
requires an average in the parameter of the family, as the formula 
in \ref{lim:averaging} of \Cref{thm_main_1}. 

Let's  clarify  in detail the 
connection of \Cref{thm_main_1} with the results in 
\cite{CalderonCorrupted}. First, we notice that, if
$D$ and $\gamma$ are smooth, the map $\Lambda_\gamma$ can be locally 
identified with a pseudodifferential operator of order $m = 1$ whose 
symbol admits an expansion as \eqref{eq:ps_1} with $m_j = 2 - j$. In this case each  $a_j$ contains information of $\partial_\nu^{j - 1} \gamma$ on 
the boundary (this was proved by Sylvester and Uhlmann 
\cite{zbMATH04028037}). Thus, for $\beta = 0$, $m_1 = 1$ is in the 
assumptions of \Cref{thm_main_1} \ref{lim:non-averaging}, so no 
average is needed to reconstruct $a_1$, which contains information 
of $\gamma$ on the boundary of $D$ ---as shown in 
\cite{CalderonCorrupted}. The second order $m_2 = 0$ 
satisfies the conditions of \Cref{thm_main_1} \ref{lim:averaging} 
for $\beta = 0$, so the formula to reconstruct $a_2$ (containing 
information on $\partial_\nu \gamma$ on $\partial D$) requires an 
average ---as in \cite{CalderonCorrupted}. On the other hand, \Cref{thm_main_2} shows that the method 
adopted in \cite{CalderonCorrupted} does not allow to recover $\partial_\nu^k \gamma$ on the boundary for $k > 1$ with $\beta = 0$. 
This point could suggest that is not possible to reconstruct 
$\gamma$ in $D$ from $\mathcal{N}$ almost surely. However, making 
the level of the noise $\mathcal{N}$ small, Abraham and Nickl 
\cite{1906.03486} have 
been able to provide an algorithm to reconstruct $\gamma$ in $D$.

Finally, our results can also be connected to those in 
\cite{Maxwell}, where the authors study the question of data 
corruption in electromagnetism and elasticity. In electrodynamics, 
the boundary data is given by either the admittance map or the 
impedance map. These maps can be identified with pseudodifferential 
operators of order $m = 0$ whose symbols can be expanded as in 
\eqref{eq:ps_1}, containing information of the unknown on the 
boundary (see \cite{mcdowall1997boundary} and 
\cite{joshi2000total}). Thus, according to our results, for $\beta = 0$ we would require making an average to obtain the leading order 
term of the expansion, while for $\beta = -1$ we would not. This 
actually coincides with the formulas given by the authors in 
\cite{Maxwell}. In this regard, the results of our paper establish a framework that helps to obtain a better understanding of the results in \cite{CalderonCorrupted} and \cite{Maxwell}.

The contents of the next sections are the following. In the \cref{sec_deterministic}, we recall the basic definition of classical symbols and show how to use waves packets to recover
the full expansion of the observable. In the \cref{sec_noise}, we define rigorously the random error and prove the main results of this paper, meaning, \Cref{thm_main_1}, \Cref{thm_main_2} and \Cref{thm_main_3}.

Throughout the paper we write $a \lesssim b$ or equivalently $b \gtrsim a$, when $a$ and $b$ are positive constants and there exists $C > 0$ so that $a \leq C b$. We refer to $C$ as the implicit constant and will only depends on $d$, $\{ m_j : j \in \NN \}$ and $\beta$. Additionally, if $a \lesssim b$ and $b \lesssim a$, we write $a \simeq b$.

\section{Recovering an observable from ideal data} \label{sec_deterministic}

In this section we provide an algorithm to recover the expansion
$\sum_{j= 1}^\infty a_j$ of the symbol of an observable $P$ when 
the available data has not been corrupted by measurement errors. The 
advantage of our method is that it can be used to filter the noise up to some extent, as described in \Cref{thm_main_1}.

Let $x_0 $ and $\xi_0 $ belong to $ \RR^d$ with $|\xi_0| =1 $ and take a smooth function $\chi$, such that its Fourier transform $\widehat{\chi}$ has compact  support in $B_1$, the open ball of radius $1$. Consider the \textit{wave packets}
\begin{equation} \label{eq:ft}
f_t(x) =   t^{d/2}\chi(t(x-x_0))e^{it^{\lambda} (x-x_0)\cdot \xi_0} ,
\end{equation}   
where $t \ge 1$ and  $1 < \lambda<\infty$. To reduce the notation in this section, we write $f_t$ instead of $f_{t, \lambda}$. In particular, we choose $\widehat{\chi}$ as a smooth cutoff satisfying
\begin{equation} \label{eq:chi_properties}
b\mathbf{1}_{B_{1/2}}(\xi) \le \widehat{\chi}(\xi) \le b  \mathbf{1}_{B_1}(\xi)  \quad   \forall \xi \in \RR^d \quad \text{and} \quad \norm{\chi} = \norm{\widehat{\chi}} =  1,
 \end{equation}
where $b>0$, and $\mathbf{1}_{B_{1/2}}$ and $\mathbf{1}_{B_1}$ denote the characteristic functions of the balls of radius $1/2$ and $1$, respectively. One can check that the wave packet $f_t$ satisfies
\begin{equation} \label{eq:sob_norm_ft}
\norm{f_t}_\beta \simeq t^{ \lambda  \beta}.
\end{equation}
Heuristically the wave packets $f_t$ will look increasingly like a plane wave close to the point $x_0$.  This is the reason to introduce the parameter $\lambda > 1$, since it guarantees that the frequency of the wave increases more quickly than the support of $\chi(t(\centerdot - x_0))$  shrinks around $x_0$. 

Before providing the algorithm to recover the expansion of the observable $P$, let us recall a few definitions. We say that a smooth function $a = a(x,\xi)$ in the \textit{phase space} $\RR^d \times \RR^d$ is a \textit{classical symbol} if it satisfies that
 \[
|\partial_x^\alpha \partial_\xi^\beta a(x,\xi)| \le C_{\alpha,\beta} \br{\xi}^{m-|\beta|},
 \]
where $\alpha $ and $ \beta$ are multi-indices and $\br{\xi} = (1+|\xi|^2)^{1/2}$. Additionally, if $\{ m_j : j \in \NN \}$ is a strictly decreasing sequence of real numbers so that $\lim_{j \to \infty} m_j = -\infty$, and $a_j$ is a classical symbol of order $m_j$, we say that $\sum_{j = 1}^\infty a_j$ expands the symbol $a$ of order $m = m_1$, and we write
$a \sim \sum_{j = 1}^\infty a_j$, if
\[a - \sum_{j = 1}^k a_j \textnormal{ with } k\in \NN \]
is a classical symbol of order $m_{k+1}$. Notice that, as remarked in the introduction, the expansion of $a$ is non unique, by the definition, any $a_j$ can be modified by adding to it a lower order symbol. On the other hand, if the homogeneity condition \eqref{eq:ps_2} is added, then $a_j(x,\xi)$ is uniquely determined for $|\xi| \ge 1/2$ ---there are no lower order symbols with the same homogeneity of $a_j$--- but it still can be modified freely for $|\xi| \le 1/2$.

We are now ready to describe the algorithm to recover the homogeneous 
expansion of a given observable. However, for simplicity, we start by  
assuming that the observable $P$
has a symbol of the form $a + b$ with $a$ and $b$ being, respectively, classical symbols of order $m$ and $n$ such that, $a$ is homogeneous in the variable $\xi$, and $n < m$. The objective is to   show that the homogeneous part of the  symbol can be recovered  from knowledge of $(f_t|P f_t)$, where $f_t$ is the family of wave packets given in \eqref{eq:ft}.
We want to prove that
\begin{equation} 
\lim_{t \to \infty} t^{-\lambda m}(f_t| P f_t) =  a(x_0,\xi_0) ,
\label{lim:claim}
\end{equation}   
with $1<\lambda<\infty$. 

We begin with a simple lemma that will be used to estimate the part of the observable $P$ corresponding to $b$.
\begin{lemma} \label{lemma_vanishing} \sl
Let $R$ be a pseudodifferential  operator of order  $n \in \RR$. Then 
\[  |(f_t| R f_t)| \lesssim t^{\lambda n}.\]
\end{lemma}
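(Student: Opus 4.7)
The plan is to compute $(f_t \mid R f_t)$ directly by moving to the Fourier side, using the fact that $\widehat{f_t}$ is concentrated in a small neighbourhood of the frequency $t^\lambda \xi_0$, and then invoking the zeroth-order symbol estimate on that neighbourhood. No integration by parts is needed.

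First I would compute the Fourier transform of $f_t$ via the change of variables $y = t(x - x_0)$, obtaining $\widehat{f_t}(\xi) = t^{-d/2} e^{-i x_0 \cdot \xi}\,\widehat{\chi}\bigl(\xi/t - t^{\lambda-1}\xi_0\bigr)$, which is supported in the ball of radius $t$ around $t^{\lambda}\xi_0$. Writing $R f_t$ via the symbol $r$ of $R$, changing variable $\eta = \xi/t - t^{\lambda-1}\xi_0$ in the frequency integral, pairing with $\overline{f_t}$ and then rescaling $y = t(x-x_0)$ in the spatial integral, the oscillatory phases carried by $f_t$ and $R f_t$ cancel exactly and one arrives at
\begin{equation*}
(f_t \mid R f_t) = (2\pi)^{-d/2} \int_{\RR^d} \overline{\chi(y)} \int_{\RR^d} e^{i y \cdot \eta}\, r(x_0 + y/t,\, t\eta + t^\lambda \xi_0)\, \widehat{\chi}(\eta)\, \dd\eta\, \dd y.
\end{equation*}

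At this point I would use only the crudest symbol estimate $|r(x,\xi)| \lesssim \br{\xi}^n$. On the support of $\widehat{\chi}(\eta)$ one has $|\eta| \le 1$, hence $|t\eta + t^\lambda \xi_0| \ge t^\lambda - t$. Choosing $T_0$ depending only on $\lambda>1$ ensures that $\br{t\eta + t^\lambda \xi_0} \simeq t^\lambda$ uniformly in $|\eta|\le 1$ and $t \ge T_0$, so the integrand is pointwise bounded by $C\, t^{\lambda n}|\chi(y)||\widehat{\chi}(\eta)|$. Since $\chi \in \mathcal{S}$ and $\widehat{\chi}$ has compact support, integrating in $y$ and in $\eta$ gives $|(f_t \mid R f_t)| \lesssim t^{\lambda n}$ for $t \ge T_0$. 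For $t \in [1, T_0]$ both sides are bounded below and above by positive constants (as continuous functions of $t$), so the inequality holds on the whole range $t \ge 1$ after possibly enlarging the implicit constant.

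The step I expect to require the most care is the lower bound on $|t\eta + t^\lambda \xi_0|$ when $n < 0$, because then the estimate $\lesssim t^{\lambda n}$ demands that the frequency stay away from zero rather than be bounded above. This is exactly what the hypothesis $\lambda > 1$ together with the compact support of $\widehat{\chi}$ in $B_1$ provides, but the argument has to distinguish the large-$t$ regime (where the quantitative bound kicks in) from the compact range $t \in [1, T_0]$ (handled by continuity).
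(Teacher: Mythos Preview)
Your proof is correct and follows essentially the same route as the paper: compute $\widehat{f_t}$, change variables to $\eta=(\xi-t^\lambda\xi_0)/t$ and $y=t(x-x_0)$, and then use the symbol bound $|r(x,\xi)|\lesssim\br{\xi}^n$ together with $\br{t\eta+t^\lambda\xi_0}\simeq t^\lambda$ on $\supp\widehat{\chi}$. The only cosmetic difference is that the paper asserts this equivalence directly for all $t\ge 1$ (the implicit constant depending on $\lambda$), whereas you split off a compact range $[1,T_0]$ and handle it by continuity; note that on that range you only need $|(f_t\mid R f_t)|$ bounded \emph{above} and $t^{\lambda n}$ bounded \emph{below}, not both two-sided.
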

\begin{proof}
 Let $b$ denote the symbol of $R$. Note that we have
\[
(f_t| R(f_t) )   =  \frac{1}{(2\pi)^{d/2}} \int_{\RR^d} \overline{f_t(x)} \Big( \int_{\RR^d} e^{i x \cdot \xi} b(x,\xi)  \widehat{f_t}(\xi) \, \dd \xi \Big) \,\dd x .
\]
By the basic properties of the Fourier transform and the form of \eqref{eq:ft} we have that 
\begin{equation} \label{eq:ft_fourier}
 \widehat{f_t}(\xi) = t^{-d/2}  e^{-i \xi \cdot x_0} \widehat{\chi}((\xi-t^{\lambda}\xi_0)/t).
\end{equation}  
After the changes of variables
\begin{equation} \label{eq:change_v}
\eta = (\xi-t^{\lambda}\xi_0)/t \quad \text{and} \quad y= t(x-x_0),
\end{equation}
we can ensure that
\begin{equation} 
|(f_t| R(f_t) )| 
\lesssim  \int_{\RR^d}  |\chi(y)|  \int_{\RR^d} \left |b(x_0 + t^{-1}y,t^{\lambda}\xi_0 + t \eta) \right|   |  \widehat{\chi}(\eta) | \, d\eta \, dy .
\label{in:symbol}
\end{equation}
Since $b$ a classical symbol of order $n$, we know that $|b(x,\xi) | \lesssim \br{\xi}^n$ for all $x, \xi \in \RR^d$, in particular
\[  | b(x_0 + t^{-1}y,t^{\lambda}\xi_0 + t \eta) | \lesssim \br{t^{\lambda}\xi_0 + t \eta}^n.\]
Note now that whenever $\eta \in \supp \widehat{\chi} \subset B_1 $, we have that $\br{t^{\lambda}\xi_0 + t \eta}^n \simeq t^{\lambda n}$. This equivalence together with \eqref{in:symbol} finishes the proof of this lemma.
\end{proof}

We now prove that the limit \eqref{lim:claim} holds.

\begin{proposition} \label{prop:a} \sl
Let  $P$ be a pseudodifferential operator with a symbol of the form $a + b $ with $a$ and $b$ classical symbols of order $ m$ and $ n < m$, respectively. Additionally, assume that $a$ is homogeneous in the variable $\xi$. Consider the wave packets $\{ f_t : t \geq 1 \}$ defined in \eqref{eq:ft}.
Then,
\[ 
  \left |t^{- \lambda {m}} (f_t| P f_t) - a(x_0,\xi_0)\right | =  O(t^{-1})  + O(t^{\lambda(n - m)}) + O(t^{1- \lambda}),
\]
and consequently
\[
  \lim_{t \to \infty} t^{- \lambda {m}} (f_t| P f_t) = a(x_0,\xi_0).
\]
\end{proposition}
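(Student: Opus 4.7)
The plan is to split the symbol as $a + b$ and treat the two summands separately. Writing $P = A + B$ where $A$ and $B$ are the pseudodifferential operators associated with $a$ and $b$, respectively, the contribution from $B$ is immediate: by \Cref{lemma_vanishing} we have $|(f_t|Bf_t)| \lesssim t^{\lambda n}$, so after multiplying by $t^{-\lambda m}$ we get the error term $O(t^{\lambda(n-m)})$. All the interesting work lies in showing that $t^{-\lambda m}(f_t|Af_t)$ converges to $a(x_0,\xi_0)$ at the advertised rate.

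For the $A$ part, I would plug \eqref{eq:ft_fourier} into the Fourier representation of $(f_t|Af_t)$ and perform the same changes of variables $y = t(x-x_0)$, $\eta = (\xi - t^\lambda \xi_0)/t$ as in \eqref{eq:change_v}. The Jacobians $t^{-d}\cdot t^d$ combine with the $t^{\pm d/2}$ factors in $f_t$ and $\widehat{f_t}$ to cancel exactly, and --- this is the key observation --- the two oscillatory exponentials $e^{-it^\lambda (x-x_0)\cdot \xi_0}$ (from $\overline{f_t}$) and $e^{i(x-x_0)\cdot t^\lambda \xi_0}$ (arising from $e^{ix\cdot\xi}e^{-i\xi\cdot x_0}$ after the substitution) cancel, leaving only the benign phase $e^{iy\cdot\eta}$. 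The result is
\[
(f_t|Af_t) = \frac{1}{(2\pi)^{d/2}} \int_{\RR^d}\int_{\RR^d} \overline{\chi(y)}\, a\bigl(x_0 + y/t,\, t^\lambda \xi_0 + t\eta\bigr)\, \widehat{\chi}(\eta)\, e^{iy\cdot\eta}\, \dd\eta\, \dd y.
\]

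Now I would apply the homogeneity \eqref{eq:ps_2}. For $t$ large and $\eta \in \supp\widehat{\chi} \subset B_1$, we have $t^{1-\lambda}|\eta| \leq 1/2$, so $|\xi_0 + t^{1-\lambda}\eta| \geq 1/2$, which legitimises the identity $a(x_0 + y/t, t^\lambda \xi_0 + t\eta) = t^{\lambda m} a(x_0 + y/t, \xi_0 + t^{1-\lambda}\eta)$. Dividing by $t^{\lambda m}$ and subtracting $a(x_0,\xi_0)$ reduces the claim to estimating
\[
\frac{1}{(2\pi)^{d/2}} \int_{\RR^d}\int_{\RR^d} \overline{\chi(y)}\,\bigl[a(x_0 + y/t, \xi_0 + t^{1-\lambda}\eta) - a(x_0,\xi_0)\bigr]\,\widehat{\chi}(\eta)\, e^{iy\cdot\eta}\, \dd\eta\, \dd y,
\]
where I have used the Fourier inversion identity (equivalently Plancherel) together with $\norm{\chi} = 1$ to see that $(2\pi)^{-d/2}\int\int \overline{\chi(y)}\widehat{\chi}(\eta)e^{iy\cdot\eta}\,\dd\eta\, \dd y = 1$ and so $a(x_0,\xi_0)$ can legitimately be pulled inside the integral.

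A first-order Taylor expansion of $a$ around $(x_0,\xi_0)$ gives
\[
|a(x_0 + y/t, \xi_0 + t^{1-\lambda}\eta) - a(x_0,\xi_0)| \lesssim t^{-1}|y| + t^{1-\lambda}|\eta|,
\]
with implicit constants depending only on sup-norms of $\nabla_x a$ and $\nabla_\xi a$ on a compact neighbourhood of $\{x_0\}\times\{\xi:|\xi-\xi_0|\leq 1/2\}$. Since $\chi$ is Schwartz and $\widehat{\chi}$ is compactly supported, the weighted $L^1$ integrals of $|y|\chi(y)$ and $|\eta|\widehat{\chi}(\eta)$ are finite, yielding the bound $O(t^{-1}) + O(t^{1-\lambda})$. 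Combined with the earlier $O(t^{\lambda(n-m)})$ contribution from $B$ this gives the stated estimate, and the convergence statement follows because each of $t^{-1}$, $t^{\lambda(n-m)}$ and $t^{1-\lambda}$ tends to zero thanks to $n < m$ and $\lambda > 1$. The only mildly delicate step is the cancellation of the large phase $e^{\pm it^\lambda y\cdot \xi_0/t}$ after rescaling; once that is noticed, the proof reduces to a clean Taylor estimate.
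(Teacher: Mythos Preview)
Your proof is correct and follows essentially the same route as the paper: split $P$ according to the symbol decomposition, dispatch the lower-order piece via \Cref{lemma_vanishing}, and for the homogeneous piece pass to the rescaled variables \eqref{eq:change_v}, use homogeneity to extract $t^{\lambda m}$, and conclude by a mean-value estimate. The only organisational difference is that the paper freezes $\xi$ first (comparing $(f_t|Af_t)$ with $(f_t|a(\cdot,t^\lambda\xi_0)f_t)$) and then $x$, in two separate steps, whereas you do both at once via a single Taylor expansion; either way yields $O(t^{-1})+O(t^{1-\lambda})$.

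One small imprecision: since $\chi$ is Schwartz but not compactly supported, the point $x_0+y/t$ ranges over all of $\RR^d$, so the sup of $\nabla_x a$ cannot be taken on a compact $x$-neighbourhood of $x_0$ as you wrote. This is harmless, though, because the classical symbol estimates $|\partial_x^\alpha a(x,\xi)|\le C_\alpha\br{\xi}^m$ are already uniform in $x$.
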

\begin{proof}
Let $Q$ and $R$ denote the pseudodifferential operators with symbols $a$ and $b$, respectively. Thus, $P = Q + R$. By the \cref{lemma_vanishing} we have that  
\[ t^{ - \lambda m}(f_t| R f_t ) = O(t^{\lambda(n - m)}).\]
Hence, it is enough to show that
\begin{equation} \label{eq:conv_1}
t^{- \lambda {m}}(f_t| {Q} f_t)  =  t^{- \lambda {m}}(f_t|a(\cdot,t^{\lambda}\xi_0)f_t) +  O(t^{1-\lambda}),
\end{equation}
since, using that $a(\cdot,t^{\lambda}\xi_0) = t^{\lambda {m}} a(\cdot,\xi_0)$, one has
\begin{multline*}
\left |t^{- \lambda {m}}(f_t|a(\cdot,t^{\lambda}\xi_0)f_t) - a(x_0,\xi_0) \right | = \left |\int_{\RR^d} \overline{f_t(x)}  a(x,\xi_0)f_t(x) \, \dd x -  a(x_0,\xi_0) \right | \\
\le \int_{\RR^d} t^{d} |\chi(t(x-x_0))|^2  |a(x,\xi_0)- a(x_0,\xi_0)| \,\dd x = O(t^{-1}), 
\end{multline*}
using that $a (\centerdot, \xi_0)$ is a Lipschitz function with uniform bound for $|\xi_0| = 1$. Thus, we just need to prove \eqref{eq:conv_1}. Note that using the Leibniz rule this is immediate if $Q$ is a differential operator. Since $Q$ has $a$ as symbol, we can write
\begin{multline*}
(f_t| {Q} f_t) - (f_t|a(\cdot,t^{\lambda}\xi_0)f_t)   \\ = \frac{1}{(2\pi)^{d/2}} \int_{\RR^d} \overline{f_t(x)} \Big( \int_{\RR^d} e^{i x \cdot \xi} \left(a(x,\xi)- a(x,t^{\lambda}\xi_0) \right)  \widehat{f_t}(\xi) \, \dd\xi \Big) \, \dd x
\end{multline*}
using the inversion formula of the Fourier transform.
We now use the form of $\widehat{f_t}$ given in \eqref{eq:ft_fourier}   and the changes of variables \eqref{eq:change_v}. Thus,
\begin{equation}
\begin{aligned} 
& \left |(f_t| {Q} f_t ) - (f_t|a(\cdot,t^{\lambda}\xi_0)f_t) \right |  \\ 
&\le     \int_{\RR^d}  |\chi(y)|  \int_{\RR^d}   \left |a(x_0 + t^{-1}y,t^{\lambda}\xi_0 + t \eta)- a(x_0 + t^{-1}y,t^{\lambda}\xi_0) \right|   |  \widehat{\chi}(\eta) | \, \dd\eta \, \dd y.
\end{aligned}
\label{eq:conv_2}
\end{equation}
Now, since $\widehat{\chi}$ is supported in the ball of radius 1, we can use that for $|\eta| \le 1$, one has
\begin{multline*}
 t^{- \lambda {m}} \left |a(x_0 + t^{-1}y,t^{\lambda}\xi_0 + t \eta)- a(x_0 + t^{-1}y,t^{\lambda}\xi_0) \right|   \\ 
=   \left |a( x_0 + t^{-1}y,\xi_0 + t^{1-\lambda} \eta )- a(x_0 + t^{-1}y,\xi_0 ) \right|  \\
\le t^{1-\lambda} \sup_{\zeta \in B_1}|\nabla_\xi a(x_0 + t^{-1}y,  \xi_0 + \zeta) | = O( t^{1-\lambda}).
\end{multline*}
Note that we have used the homogeneity property to get the second line.  Multiplying \eqref{eq:conv_2} by $t^{- \lambda m}$ and using the previous inequality we obtain the desired result.
\end{proof}

Finally, we use the \cref{prop:a} to derive the algorithm to recover the full expansion of the observable $P$ from the ideal data. Recall that its symbol satisfies
\[a \sim \sum_{j = 1}^\infty a_j\]
with $a_j$ of order $m_j$ and homogeneous in the variable $\xi$. For convenience, set $P_1 = P$ and note that its symbol can be written as
$a = a_1 + b_1$ with $a_1$ homogeneous of order $m_1 = m$ and  $b_1 = a - a_1$ of order $m_2 < m_1$. By the \cref{prop:a},
\begin{equation}
  \left |t^{- \lambda {m_1}} (f_t| P_1 f_t) - a_1(x_0,\xi_0)\right | =  O(t^{-1})  + O(t^{\lambda(m_2 - m_1)}) + O(t^{1- \lambda}),
\label{lim:P1}
\end{equation}
and consequently
\begin{equation}
  \lim_{t \to \infty} t^{- \lambda {m_1}} (f_t| P f_t) = a_1(x_0,\xi_0).
  \label{lim:a_1}
\end{equation}
Next, let $Q_1$ denote the pseudodifferential operator with symbol $a_1$, and set $P_2 = P - Q_1$. The symbol of $P_2$ can be written as $a_2 + b_2$ with $a_2$ homogeneous of order $m_2$ and $b_2 = a - a_1 - a_2$ of order $m_3 < m_2$. Again, by the \cref{prop:a} we have that
\[ 
  \left |t^{- \lambda {m_2}} (f_t| P_2 f_t) - a_2(x_0,\xi_0)\right | =  O(t^{-1})  + O(t^{\lambda(m_3 - m_2)}) + O(t^{1- \lambda}),
\]
and consequently
\[
  \lim_{t \to \infty} t^{- \lambda {m_2}} \big[ (f_t| P f_t) - (f_t| Q_1 f_t) \big] = a_2(x_0,\xi_0).
\]
Now, assume that we have already recovered $a_1, \dots, a_{j-1}$ and let
$Q_1, \dots, Q_{j-1}$ denote the pseudodifferential operators with symbols $a_1, \dots, a_{j-1}$, respectively. Then, set
\begin{equation}
P_j = P - \sum_{k = 1}^{j-1} Q_k, \textnormal{ for } j\in \NN \textnormal{ with } j > 1.
\label{id:Pj}
\end{equation}
Note that the symbol of $P_{j}$ can be written as $a_{j} + b_{j}$ with $a_{j}$ homogeneous of order $m_{j}$ and $b_{j} = a - \sum_{k = 1}^{j} a_k $ of order $m_{j+1} < m_{j}$. By the \cref{prop:a} we have that
\begin{equation}
  \left |t^{- \lambda {m_j}} (f_t| P_j f_t) - a_j(x_0,\xi_0)\right | =  O(t^{-1})  + O(t^{\lambda(m_{j+1} - m_j)}) + O(t^{1- \lambda}),
\label{lim:Pj}
\end{equation}
and consequently
\begin{equation}
\lim_{t \to \infty} t^{- \lambda {m_j}} \big[ (f_t| P f_t) - \sum_{k=1}^{j-1} (f_t| Q_k f_t) \big] = a_j(x_0,\xi_0).
  \label{lim:aj}
\end{equation}
Note that this algorithm only requires knowledge of $P$ and the different orders $\{ m_j : j\in \NN \}$ of the homogeneous expansion of its symbol.

\section{Filtering the noise in the measurements} \label{sec_noise}
In this section we show how to filter out the random noise to recover the observable from 
the measurements with full certainty. Unfortunately, ---as described in \Cref{thm_main_2}--- with our method it is impossible to 
recover the full expansion of the symbol of $P$ since, for very low order terms of the 
expansion, the variance of the error becomes so large that the signal produced by 
the observable is lost in the noise. We also show  in this section how this phenomenon works.

Before addressing these questions, we recall some properties about white noise, and define 
the random errors contained in our measurements. Let $(\Omega, \mathcal{F}, \PP)$ be a probability space and $\mathcal{H}$ be a Hilbert space. A linear map $\WW : \mathcal{H} \longrightarrow L^2(\Omega, \mathcal{F}, \PP) $ is said to be a \textit{complex Gaussian white noise} if, for every $f \in \mathcal{H}$, $\WW f$ is a centered complex Gaussian variable and
\begin{equation}
\EE (\overline{\WW f}\WW g) = (f|g)_\mathcal{H}, \textnormal{ for all } f,g \in \mathcal{H}.
\label{id:isometry}
\end{equation}
Here $(\centerdot|\centerdot)_\mathcal{H}$ denotes the inner product of $\mathcal{H}$. If $\mathcal{H}$ is a separable Hilbert space and $(\Omega, \mathcal{F}, \PP)$ is a probability space so that there exists a sequence $\{ X_n : n \in \NN \}$ of independent complex Gaussian so that
\[\EE X_n = 0, \qquad \EE (\overline{X}_n X_n) = 1,\]
then, there always exists a complex Gaussian white noise for $(\Omega, \mathcal{F}, \PP)$ and $\mathcal{H}$. It is enough to consider an 
orthonormal basis $\{ e_n : n \in \NN \}$ of $\mathcal{H}$, and define
$\WW f = \sum_{n\in\NN} (e_n|f)_\mathcal{H} X_n$, where the convergence takes place in
$L^2(\Omega, \mathcal{F}, \PP)$. On 
the other hand, if $\WW$ is a complex Gaussian white noise with $\mathcal{H}$ separable, we
set $X_n = \WW e_n$, and then $\WW f = \sum_{n\in \NN} (e_n|f)_\mathcal{H} X_n$. By definition $X_n$ is a centered complex Gaussian variable and since
$\WW$ is an isometry, $\EE (\overline{X}_n X_n) = 1$. Additionally,
$X_1, \dots, X_n, \dots$ are independent, since they are uncorrelated and every finite 
linear combination of any of them is a centered Gaussian variable.

With these properties at hand, we define now the \textit{error} $\mathcal{E}_\beta$ 
from a complex Gaussian white noise in the Hilbert space
$\mathcal{H} = H^\beta(\RR^d) \otimes H^\beta(\RR^d)$ as follows:
\begin{equation}
\mathcal{E}_\beta (f, g) = \WW (f \otimes g).
\label{def:error}
\end{equation}
Recall that $(f_1\otimes g_1 | f_2\otimes g_2)_\mathcal{H} = (f_1|f_2)_\beta (g_1|g_2)_\beta$.
This explains why the error $\mathcal{E}_\beta$ takes the form in \eqref{id:error}, 
with $X_\alpha = \WW (e_{\alpha_1} \otimes e_{\alpha_2})$ and $\{ e_n: n \in \NN \}$ 
an orthonormal basis of $H^\beta(\RR^d)$. In \cite{CalderonCorrupted} and \cite{Maxwell}, the authors chose the form of \eqref{id:error} to define the error. Here we have chosen a more intrinsic way of defining it. After these comments, it is straightforward to check \eqref{eq:varianza}: $\EE \mathcal{E}_\beta (f, g) = \EE \WW (f \otimes g) = 0$, and
\[\EE |\mathcal{E}_\beta (f, g)|^2 = \EE |\WW (f \otimes g)|^2 = (f \otimes g|f \otimes g)_\mathcal{H} = (f|f)_\beta (g|g)_\beta = \norm{f}_\beta^2 \norm{g}_\beta^2. \]

Note that filtering out the noise, consists essentially in proving that the error $\mathcal{E}_\beta (\overline{f_t}, f_t)$ goes to zero almost surely when $t$ tends to infinity. This property can be seen from its variance $\EE | \mathcal{E}_\beta (\overline{f_t}, f_t) |^2$, and this is the reason to state here the following lemma.

\begin{lemma} \label{prop:as_convergence} \sl
Let $(X,\Sigma,\mu)$ be a measure space, and let $\{f_n: n \in \NN \}$ be a sequence in $L^p(X,\Sigma,\mu)$ with $1 \le p< \infty$ converging to $f$ in $L^p(X,\Sigma,\mu)$. Assume there exists a sequence  $\{  b_n : n\in \NN \}$ of positive real numbers whose limit vanishes, and 
\begin{equation} \label{eq:series}
 \sum_{n=1}^\infty \frac{1}{b_n^p} \int_{X} |f_n-f|^p  \, d\mu  <\infty. 
\end{equation}
Then, $f_n(x) $ goes to $ f(x)$ as $n  $ tends to $ \infty $ for almost every $x\in X$.
\end{lemma}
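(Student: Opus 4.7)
The plan is to reduce the pointwise almost-everywhere statement to the elementary fact that the general term of a convergent numerical series tends to zero, applied pointwise after using monotone convergence to pass from the summability of $L^p$-integrals to pointwise summability.

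First, I would consider the nonnegative measurable function
\[
g(x) = \sum_{n=1}^\infty \frac{|f_n(x) - f(x)|^p}{b_n^p}.
\]
Applying monotone convergence to the partial sums and invoking hypothesis \eqref{eq:series} yields
\[
\int_X g \, d\mu = \sum_{n=1}^\infty \frac{1}{b_n^p}\int_X |f_n - f|^p \, d\mu < \infty,
\]
so $g$ is finite for $\mu$-almost every $x \in X$.

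Next, I would fix any such $x$ with $g(x) < \infty$. Since the numerical series $\sum_n |f_n(x) - f(x)|^p / b_n^p$ converges, its general term satisfies
\[
\frac{|f_n(x) - f(x)|^p}{b_n^p} \to 0 \quad \text{as } n\to\infty.
\]
Factoring $|f_n(x) - f(x)|^p = b_n^p \cdot \bigl(|f_n(x) - f(x)|^p / b_n^p\bigr)$ and using the hypothesis $b_n \to 0$, both factors tend to zero, so $f_n(x) \to f(x)$. This holds for $\mu$-almost every $x$, giving the conclusion.

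There is no substantive obstacle: the argument is a clean combination of Tonelli/monotone convergence with the convergent-series-term criterion. The role of the hypothesis $b_n \to 0$ is precisely to upgrade the vanishing of the normalised quantity $|f_n - f|^p / b_n^p$ into the vanishing of $|f_n - f|^p$ itself. As an incidental remark, the $L^p$-convergence assumption is redundant here: the same chain of estimates applied at the integral level, together with $b_n^p \to 0$, already forces $\norm{f_n - f}$ in $L^p$ to vanish.
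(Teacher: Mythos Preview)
Your argument is correct. It differs from the paper's proof, which proceeds via a Borel--Cantelli type argument: the paper sets $E_n = \{x : |f_n(x) - f(x)| > b_n\}$, uses Chebyshev's inequality to bound $\mu(E_n) \le b_n^{-p}\int_X |f_n - f|^p\, d\mu$, concludes $\mu(\limsup_n E_n) = 0$, and then notes that any $x$ outside this set satisfies $|f_n(x) - f(x)| \le b_n \to 0$ for all large $n$. Your route is slightly more direct: by monotone convergence the pointwise series $\sum_n |f_n(x)-f(x)|^p/b_n^p$ is finite almost everywhere, so its general term vanishes and (since $b_n$ is bounded) $|f_n(x)-f(x)| \to 0$. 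The Borel--Cantelli version has the minor advantage of giving the explicit quantitative statement $|f_n(x) - f(x)| \le b_n$ eventually, which is closer in spirit to how the lemma is applied later in the paper; your version trades that for a shorter argument avoiding Chebyshev. Your closing remark that the $L^p$-convergence hypothesis is redundant is also correct, and applies equally to the paper's proof.
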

This fact is well known, but for completeness, we include a proof.
\begin{proof}
Set $E_n = \{x\in X: |f_n(x) -f(x) | > b_n \}$. Using Chebyshev's inequality we see that
\[ \mu \left( \bigcap_{n=1}^\infty \bigcup_{k=n}^\infty E_k \right) \le \sum_{k=n}^{\infty} \mu(E_k) \le \sum_{k=n}^{\infty} \frac{1}{b_n^p} \int_{X} |f_n-f|^p \,  d\mu   ,\]
which, after \eqref{eq:series},  yields that
\[
 \mu \left( \bigcap_{n=1}^\infty \bigcup_{k=n}^\infty E_k \right) = 0.
 \]
On the other hand, if $x\notin \cap_{n=1}^\infty \cup_{k=n}^\infty E_k$, it means that 
there exists an $n_x$ such that $|f_n(x) - f(x)  | \le b_n $ for all $n \ge n_x$. Thus, 
$f_n (x) $ tends to $ f (x)$ for almost every $x \in X$.
\end{proof}

Now, we are in the position to prove the theorems stated in the introduction of this paper. 
We start by showing when it is possible to use wave packets to filter the noise.

\begin{proof}[Proof of \cref{thm_main_1}]
Start by noting that, whenever $1 < \lambda < \infty$, $\beta \in \RR$ and $j \in \NN$, we have by \eqref{eq:varianza} and \eqref{eq:sob_norm_ft} that
\begin{equation}
\EE |t^{-\lambda m_j} \mathcal{E}_\beta (\overline{f_t}, f_t)|^2 \simeq t^{-2\lambda (m_j - 2\beta)},
\label{eq:error}
\end{equation}
where the implicit constant only depends on $d$ $\lambda $, $m_j$ and $\beta$.
Then, we have,  by \eqref{lim:P1} and \eqref{eq:error}, that
\[\EE |t^{-\lambda_1 m_1} \mathcal{N}_{\beta, P} (f_{t,\lambda_1}, f_{t,\lambda_1}) - a_1 (x_0, \xi_0) |^2 \lesssim t^{-2}. \]
While for $j \in \{ 2, \dots, j_{\beta} \}$, we have, by \eqref{lim:Pj} and \eqref{eq:error}, that 
\[\EE \Big|t^{-\lambda_j m_j} \big[\mathcal{N}_{\beta, P} (f_{t,\lambda_j}, f_{t,\lambda_j}) - \sum_{k=1}^{j-1} (f_{t,\lambda_j}| Q_k f_{t,\lambda_j}) \big] - a_j (x_0, \xi_0) \Big|^2 \lesssim t^{-2}. \]
Thus, to prove the part \ref{lim:non-averaging} of \cref{thm_main_1}, we apply the \cref{prop:as_convergence} with $b_n = n^{-1/2 + \varepsilon}$ with $\varepsilon < 1/2$.

Next we continue with the part \ref{lim:averaging} of \cref{thm_main_1}. Note that for $j > j_\beta$ the variance of the error does not decay any more, it could even grow ---see \eqref{eq:error}. Thus, inspired by the strong law of large numbers or, more generally, basic ergodicity results, we perform an average on the parameter $t$ of the family of wave packets. This motivates the next lemma, which is a refinement of \cite[Lemma 3.5]{CalderonCorrupted}.

\begin{lemma} \label{lemma:ergodic} \sl
Consider $1<\lambda< \infty $ and $m ,\beta \in \RR$. Assume also that $x_0,\xi_0 \in \RR^d$ with $|\xi_0|=1$ in the definition of the wave packets $f_t$. Then, for $T > 2^{1/(\lambda-1)}$
\[ \EE \left | \frac{1}{T} \int_{T}^{2T} t^{- \lambda m} \mathcal  E_\beta(\overline{f_t}, f_t)   \, \dd t \, \right|^2  \simeq  T^{2\lambda (2\beta -\frac{1}{2}-m) +1 }.\]
where the implicit constants in the equivalence  depend on  $d$, $m$, $\lambda$ and $\beta$.
\end{lemma}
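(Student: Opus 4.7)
The plan is to expand the square and exploit the isometry of the white noise $\WW$. Writing $\mathcal{E}_\beta(\overline{f_t}, f_t) = \WW(\overline{f_t}\otimes f_t)$ and noting $(\overline{f_t}|\overline{f_s})_\beta = \overline{(f_t|f_s)_\beta}$, the covariance is
\[
\EE\,\overline{\mathcal{E}_\beta(\overline{f_t},f_t)}\,\mathcal{E}_\beta(\overline{f_s},f_s) = (\overline{f_t}\otimes f_t\,|\,\overline{f_s}\otimes f_s)_{H^\beta\otimes H^\beta} = |(f_t|f_s)_\beta|^2,
\]
so by Fubini the quantity to estimate becomes
\[
\frac{1}{T^2}\int_T^{2T}\!\!\int_T^{2T} (ts)^{-\lambda m} |(f_t|f_s)_\beta|^2\,\dd t\,\dd s.
\]

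Next I would compute $(f_t|f_s)_\beta$ in Fourier space using \eqref{eq:ft_fourier} together with the reality of $\widehat{\chi}$ from \eqref{eq:chi_properties}:
\[
(f_t|f_s)_\beta = (ts)^{-d/2}\!\int_{\RR^d}\br{\xi}^{2\beta}\,\widehat{\chi}\bigl((\xi-t^\lambda\xi_0)/t\bigr)\,\widehat{\chi}\bigl((\xi-s^\lambda\xi_0)/s\bigr)\,\dd\xi.
\]
The change of variables $\xi = t\eta + t^\lambda\xi_0$ used in \cref{lemma_vanishing} converts the weight to $\br{t\eta + t^\lambda\xi_0}^{2\beta} = t^{2\lambda\beta}(1 + O(t^{1-\lambda}))$ uniformly on $\supp\widehat{\chi}$, and the second cutoff to $\widehat{\chi}\bigl(t\eta/s + (t^\lambda-s^\lambda)\xi_0/s\bigr)$. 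Because $\supp\widehat{\chi}\subset\overline{B_1}$, non-vanishing of this second cutoff forces $|(t^\lambda-s^\lambda)/s|\lesssim 1$, i.e.\ $|t-s|\lesssim T^{2-\lambda}$; this is the ``stationary strip'' alluded to in the introduction, outside of which $(f_t|f_s)_\beta$ vanishes and contributes nothing.

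To extract the leading order I would rescale inside the strip by $t = s + T^{2-\lambda}\rho$ (Jacobian $T^{2-\lambda}$) and write $\sigma = s/T\in[1,2]$. The expansions $t/s = 1 + O(T^{1-\lambda})$ and $(t^\lambda-s^\lambda)/s = \lambda\sigma^{\lambda-2}\rho + O(T^{1-\lambda})$ give, as $T\to\infty$,
\[
(f_t|f_s)_\beta = t^{2\lambda\beta}\bigl(I(\sigma,\rho)+o(1)\bigr),\quad I(\sigma,\rho):=\int_{\RR^d}\widehat{\chi}(\eta)\,\widehat{\chi}(\eta + \lambda\sigma^{\lambda-2}\rho\,\xi_0)\,\dd\eta,
\]
and substituting back the double integral is of order
\[
\frac{1}{T^2}\cdot T \cdot T^{2-\lambda}\cdot T^{-2\lambda m + 4\lambda\beta}\cdot C_* \;=\; C_*\,T^{2\lambda(2\beta-1/2-m)+1},
\]
with $C_* = \int_1^2\!\int_\RR I(\sigma,\rho)^2\,\dd\rho\,\dd\sigma$, which is precisely the claimed order.

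The main obstacle is converting these heuristics into matching two-sided bounds $\simeq T^{2\lambda(2\beta-1/2-m)+1}$. For the upper bound, the $O(t^{1-\lambda})$ errors from the weight and from the rescaled second cutoff must be controlled uniformly in $(\sigma,\rho)$ up to the endpoints of $[T,2T]$. The hypothesis $T > 2^{1/(\lambda-1)}$ is what guarantees $T^{\lambda-1} > 2$, so the rescaled range $\rho\in[-T^{\lambda-1},T^{\lambda-1}]$ actually contains the compact effective support of $I(\sigma,\cdot)$ and no boundary mass is lost. For the lower bound one needs $C_* > 0$, which follows from $I(\sigma,0) = \norm{\widehat{\chi}}^2 > 0$ and the continuity of $I$ in $\rho$, so $I(\sigma,\rho)^2$ is bounded below on a set of positive measure in $(\sigma,\rho)$.
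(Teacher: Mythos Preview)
Your approach is correct and, for the upper bound, genuinely different from the paper's. You work entirely on the Fourier side and exploit the compact support of $\widehat{\chi}$ to conclude that $(f_t|f_s)_\beta$ vanishes identically once $|t-s|\gtrsim T^{2-\lambda}$; then a crude bound $|(f_t|f_s)_\beta|\lesssim T^{2\lambda\beta}$ on the strip together with the strip area $\simeq T^{3-\lambda}$ already gives the upper half of the $\simeq$. The paper instead passes to the spatial representation \eqref{eq:prod_identity} and integrates by parts with the operator $L = t+s - i\,\sign(t-s)\,\xi_0\cdot\nabla$ to obtain the pointwise decay $|(f_t|f_s)|\lesssim (1+T^{-1}|t^\lambda-s^\lambda|)^{-1}$, and then uses the change of variables $v = T^{-1}(s^\lambda - t^\lambda)$. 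Your route is more elementary here (no oscillatory-integral argument is needed) and in fact gives more, since your limiting profile $I(\sigma,\rho)$ would yield the exact leading constant, not just the order. For the lower bound the two arguments coincide in spirit: the paper bounds $|(f_t|f_s)|$ from below on the set $\{|t^\lambda-s^\lambda|\le T/4\}$ via the overlap of the two Fourier balls and the pointwise lower bound $\widehat{\chi}\ge b\,\mathbf{1}_{B_{1/2}}$, which is exactly your $I(\sigma,0)=\norm{\widehat{\chi}}^2>0$ plus continuity.

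Two small points to tighten. First, your reading of the hypothesis $T>2^{1/(\lambda-1)}$ is slightly off: its role is not about the $\rho$-range containing $\supp I(\sigma,\cdot)$ (that holds for any $T>1$) but rather to guarantee $t^{1-\lambda}<1/2$ on $[T,2T]$, so that $|t^{1-\lambda}\eta+\xi_0|\ge 1/2$ for $\eta\in B_1$ and hence $\br{t\eta+t^\lambda\xi_0}\simeq t^\lambda$ uniformly --- this is exactly what makes your weight expansion $t^{2\lambda\beta}(1+O(t^{1-\lambda}))$ valid. Second, to turn your sketch into a proof of the two-sided bound, replace the $o(1)$ by the quantitative $O(T^{1-\lambda})$ you already identified (using that $\widehat{\chi}$ is Lipschitz), and note that your change of variables $t=s+T^{2-\lambda}\rho$, $\sigma=s/T$ maps $Q_T$ onto a region that, intersected with the support of the integrand, differs from $[1,2]\times[-C,C]$ only by a set of measure $O(T^{1-\lambda})$.
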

\begin{proof}
For convenience, set $Q_T = [T,2T] \times [T,2T]$. Note that by \eqref{def:error} and \eqref{id:isometry}
\[ \EE \big[ \mathcal  E_\beta(\overline{f_t}, f_t) \overline{\mathcal  E_\beta(\overline{f_s}, f_s)} \big] = |(f_t |f_s)_\beta|^2, \]
and consequently,
\begin{multline} \label{eq:est_sim}
\EE \left | \frac{1}{T} \int_{T}^{2T} t^{- \lambda m} \mathcal  E_\beta(\overline{f_t}, f_t) \, \dd t \, \right|^2 \\
= \frac{1}{T^{2}} \int_{Q_T} t^{- \lambda m }s^{- \lambda m} |(f_{t}|f_{s})_\beta |^2 \, \dd (t,s)
\simeq  T^{-2 \lambda m-2} \int_{Q_T}  |(f_t |f_s)_\beta|^2 \, \dd (t,s). 
\end{multline}
We claim that 
 \begin{equation} \label{eq:claim}
 (f_t |f_s)_\beta  \simeq T^{2 \lambda \beta} (f_t |f_s) \textnormal{ for all } (t,s) \in Q_T.
 \end{equation}
To see this we plug in $\widehat{f_t}(\xi) = t^{-d/2}  e^{-i \xi \cdot x_0} \widehat{\chi}((\xi-t^{\lambda}\xi_0)/t)$ to \eqref{eq:inner_p} and recall that $\widehat{\chi}$ is non-negative and supported in the ball $B_1$. The change of variables $\eta =(\xi-s^{\lambda}\xi_0)/s $ yields
\begin{multline}\label{eq:beta_sim}
 |(f_t |f_s)_\beta| =  t^{-d/2} s^{-d/2}  \int_{\RR^d}\widehat{\chi}\left( \frac{\xi-t^{\lambda}\xi_0}{t} \right )  \widehat{\chi}\left( \frac{\xi-s^{\lambda}\xi_0}{s} \right )  \br{\xi}^{2\beta} \, \dd \xi   \\
 =  t^{-d/2} s^{d/2}   \int_{B_1}  { \widehat{\chi} \left (\frac{s}{t} \eta + \frac{s^\lambda-t^\lambda}{t}   \xi_0  \right ) } \widehat{\chi}( \eta)    \br{s\eta + s^{\lambda} \xi_0}^{2\beta} \, \dd\eta   \simeq  s^{2 \lambda \beta} |(f_t |f_s)|.
\end{multline}
To obtain the last line we have used that $1/2 s^{\lambda} \le \br{s\eta + s^{\lambda} \xi_0} $ for $|\eta| \le 1$ and $s \geq 2^{1/(\lambda-1)}$, and consequently that $\br{s\eta + s^{\lambda} \xi_0} \simeq s^{\lambda}$. This proves the claim. Then, by \eqref{eq:est_sim} and \eqref{eq:claim}, to finish the proof of  the lemma it is enough to show that
 \begin{equation} \label{eq:prod_sim}
\int_{Q_T} |(f_t |f_s)|^2 \, \dd (t,s)  \simeq  T^{3-\lambda}.
 \end{equation}
We begin with the upper bound. Since
\begin{equation} \label{eq:prod_identity}
(f_t |f_s) = t^{d/2} s^{d/2} \int_{\RR^d} e^{i(t^\lambda -s^\lambda)  x\cdot\xi_0} \chi(t(x-x_0)) \chi(s(x-x_0))\, \dd x ,
\end{equation}
  to exploit the cancellations  produced by the exponential factor, let $L$ be the formally self-adjoint operator given by $L f = t+s - i\frac{t-s}{|t-s|}\xi_0 \cdot \nabla f $. Then, since $|\xi_0| =1$, we have that
\begin{align*}
 (f_t |f_s) &=  \frac{t^{d/2} s^{d/2}}{ t+s +|t^\lambda-s^\lambda|} \int_{\RR^d} L(e^{i(t^\lambda-s^\lambda)  x\cdot\xi_0}) \chi(t(x-x_0)) \chi(s(x-x_0))\, \dd x \\
 &=   \frac{t^{d/2} s^{d/2}}{ t+s +|t^\lambda-s^\lambda|} \int_{\RR^d} e^{i(t^\lambda-s^\lambda)  x\cdot\xi_0} L \left(\chi(t(x-x_0)) \chi(s(x-x_0)) \right) \, \dd x,
 \end{align*}
and using that
 \begin{multline*} |L \left (\chi(t(x-x_0)) \chi(s(x-x_0))  \right) |  \le (t+s)  |\chi(t(x-x_0)) \chi(s(x-x_0))|   \\ + t |\nabla\chi(t(x-x_0)) \chi(s(x-x_0))|+ s|\chi (t(x-x_0)) \nabla\chi(s(x-x_0))| ,
 \end{multline*}
leads, since $t+s \simeq T$ whenever $(t,s) \in Q_T$, to
 \begin{equation*}
 |(f_t |f_s)| \lesssim   \frac{t+s }{ t+s +|t^\lambda-s^\lambda| }  \norm{\chi}_1^2 \lesssim  \frac{1}{1+ T^{-1}{|t^\lambda-s^\lambda|}}.
 \end{equation*}
Moreover, whenever $(t,s) \in Q_T$, we have
\begin{equation*}
 |(f_t |f_s)|^2  \lesssim T^{1 - \lambda} \frac{s^{\lambda - 1}}{(1+ T^{-1}{|t^\lambda-s^\lambda|})^2}.
 \end{equation*}
Now, we make the change of variables $u=t$, $v =  T^{-1}(s^\lambda -t^{\lambda}) $ where the volume form is  $\dd u \, \dd v = \lambda T^{-1} s^{\lambda-1}\dd (t,s)$, and therefore
 \begin{multline} \label{eq:Tk1}
 \int_{Q_T} |(f_t |f_s)|^2 \, \dd (t,s) \lesssim T^{1 - \lambda}  \int_{Q_T}  \frac{s^{\lambda - 1}}{(1+ T^{-1}{|t^\lambda-s^\lambda|})^2}  \, \dd (t,s) \\
  \lesssim T^{2-\lambda} \int_T^{2T} \, \dd u \int^\infty_{-\infty}  \frac{1}{(1+|v|)^{2}}  \, \dd v \simeq T^{3-\lambda}.
\end{multline}
 This finishes the proof of the upper bound, next we prove the lower bound. Using the second identity in \eqref{eq:beta_sim} with $\beta =0$, we have
\begin{align*}
 |(f_t |f_s)| &=   t^{-d/2} s^{d/2}   \int_{B_1}   {\widehat{\chi} \left (\frac{s}{t} \eta + \frac{s^\lambda-t^\lambda}{t}   \xi_0   \right )} \widehat{\chi}( \eta)    \, \dd\eta  \\
  &\ge   b t^{d/2} s^{-d/2} \int_{B_{1/2}}   \widehat{\chi} \left (\frac{s}{t} \eta + \frac{s^\lambda-t^\lambda}{t}   \xi_0   \right )   \, \dd\eta ,
 \end{align*}
 where to get the last line we have used that by \eqref{eq:chi_properties}, $\widehat{\chi}$ is nonnegative and  $\widehat{\chi}(\eta)=  b $ in $B_{1/2}$. This also implies that the last integral can be bounded below by $b$ times the measure of the set $B_{1/2} \cap B_{s,t}$ where $B_{s,t}$ is the ball of radius {$r=t/(2s)$} and center  {$c = (t^\lambda-s^\lambda)/s \,  \xi_0  $}. 
Whenever $(t,s) \in Q_T$, we have that $r \ge  1/4$ and $|c| \le  T^{-1}|t^{\lambda}-s^{\lambda}| $. If additionally, $ (t, s) \in Q_T$ is so that $  |t^{\lambda}-s^{\lambda}| \le T/4$, then $|c| \le 1/4$. This implies that actually we have at least $ |B_{1/2} \cap B_{s,t}| \ge |B_{1/4}|$ since the previous estimates for $r$ and $c$ imply that the intersection must contain  a ball of radius $1/4$. Therefore, we have
\[|(f_t |f_s)|  \ge b^2 t^{d/2} s^{-d/2} |B_{1/2} \cap B_{s,t}| \ge  b^2 2^{-d/2}|B_{1/4}|,\]
in the set $D= \{ (t, s) \in Q_T : |t^{\lambda}-s^{\lambda}| \le T/4\}$, and consequently that
\[ \int_{Q_T} |(f_t |f_s)|^2 \, \dd (t,s) \gtrsim \int_{D}  \, \dd (t,s) \simeq T^{1 - \lambda} \int_{D} s^{\lambda - 1} \, \dd (t,s) \simeq T^{2 - \lambda} \int_T^{2T} \, \dd u \int^{1/4}_{-1/4}  \, \dd v,\]
by the same change of variables used in \eqref{eq:Tk1}. This proves the lower bound in \eqref{eq:prod_sim}, which ends the proof of the \cref{lemma:ergodic}.
\end{proof}
By \eqref{lim:Pj} and the \cref{lemma:ergodic}, we have that
\[\EE \left| \frac{1}{T} \int_T^{2T} t^{-\lambda_j m_j} \big[\mathcal{N}_{\beta, P} (f_{t, \lambda_j}, f_{t, \lambda_j}) - \sum_{0<k<j} (f_{t, \lambda_j}| Q_k f_{t, \lambda_j}) \big] \, \dd t - a_j (x_0, \xi_0) \right|^2 \lesssim T^{-1-2\epsilon_j} \]
for certain $\epsilon_j > 0$. Thus, the part \ref{lim:averaging} of \cref{thm_main_1} follows from the \cref{prop:as_convergence} with $b_n = n^{-\epsilon_j/2}$. This finishes the proof of \Cref{thm_main_1}.
\end{proof}

Next, we show why wave packets are not suitable states to recover the 
lower order terms of the expansion of the observable.
\begin{proof}[Proof of \Cref{thm_main_2}]
In the view of \eqref{eq:Nbj} and the limits \eqref{lim:a_1} and \eqref{lim:aj}, one realizes that the relevant quantities
to prove \ref{eq:noconv1} and \ref{eq:noconv2}  are the random variables given by the error and the averaged error produced by the wave packets:
\begin{equation} \label{eq:XY}
 X_t  =  t^{- \lambda m} \mathcal  E_\beta(\overline{f_t}, f_t),   \qquad
 Y_T  = \frac{1}{T} \int_{T}^{2T} t^{- \lambda m} \mathcal  E_\beta(\overline{f_t}, f_t)   \, \dd t.
 \end{equation} 
Here we simplify the notation as we did in the \cref{sec_deterministic}, we write $f_t$ instead of $f_{t,\lambda}$ for the states.

The variables $X_t$ and $Y_T$ are complex Gaussian with zero mean. The variable $X_t$ is complex Gaussian because $\WW (\overline{f_t} \otimes f_t)$ is a complex Gaussian variable. To see that $Y_T$ is complex Gaussian, we should understand the integral in $\dd t$ as a limit of finite linear combinations $\{ \mathcal  E_\beta(\overline{f_{t_1}}, f_{t_1}), \dots, \mathcal  E_\beta(\overline{f_{t_n}}, f_{t_n}) \}$ for every $n \in \NN$. These linear combinations are actually complex Gaussian variables by the definition and the linearity of $\WW$. Eventually, the limit of of complex Gaussian variables is complex Gaussian, so consequently $Y_T$ is a complex Gaussian variable.

Recall that if $Z$ is a complex Gaussian variable with zero mean 
we have that
\begin{equation} \label{eq:prob_formula}
\PP \{ |Z| > c \} = \frac{1}{2\pi |\textnormal{det} K|^{1/2}} \int_{|z| > c} \exp(-z \cdot K^{-1} z/2) \, \dd z ,
\end{equation}
where $K$
is the covariance matrix {of Z} given by
\[ 
K = \frac{1}{2}
\begin{bmatrix}
\EE |Z|^2 + \Re(\EE Z^2) & \Im (\EE Z^2)\\
\Im (\EE Z^2) & \EE |Z|^2 - \Re (\EE Z^2)
\end{bmatrix}.
\]
In the special case where $\EE |Z|^2 = \sigma^2$ and $\EE Z^2 = 0$, one can check ---by computing explicitly the integral \eqref{eq:prob_formula}--- that its distribution is generated by
\begin{equation} \label{eq:dist}
 \PP \{ |Z|>c \} = e^{-c^2/\sigma^2}.
 \end{equation}
We will see in the next lemma that $\EE X_t^2 = \EE Y_T^2 = 0$, and therefore, the distributions of $X_t$ and $Y_T$ are generated in the same way.
{

\begin{lemma} \label{lemma:prod} \sl
Consider $1<\lambda< \infty $ and $m ,\beta \in \RR$. Assume also that $x_0,\xi_0 \in \RR^d$ with   $|\xi_0|=1$ in the definition of the wave packets $f_t$. Then, for any  $t,T > 1$ we have
\begin{equation} \label{eq:prod1}
 \EE \left ( t^{- \lambda m} \mathcal  E_\beta(\overline{f_t}, f_t)  \right)^2  = 0.
 \end{equation}
and
\begin{equation} \label{eq:prod2}
 \EE \left ( \frac{1}{T} \int_{T}^{2T} t^{- \lambda m} \mathcal  E_\beta(\overline{f_t}, f_t)   \, \dd t \, \right)^2  = 0.
 \end{equation}
\end{lemma}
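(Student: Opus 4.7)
The plan is to reduce both identities to a single algebraic fact: the pseudo-covariance $\EE (X_\alpha X_\gamma)$ vanishes for every pair $\alpha, \gamma \in \NN^2$. When $\alpha \neq \gamma$ this is immediate from independence together with $\EE X_\alpha = 0$. When $\alpha = \gamma$ this is the circular (proper) character of a standard complex Gaussian implicit in \eqref{eq:complexGaussian}: writing $X_\alpha = U_\alpha + i V_\alpha$ with $U_\alpha, V_\alpha$ real independent $N(0, 1/2)$ variables, one has $\EE X_\alpha^2 = \EE(U_\alpha^2 - V_\alpha^2) + 2 i \EE(U_\alpha V_\alpha) = 0$. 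This observation should be recorded explicitly near the definition of the complex Gaussian white noise, since it is part of the standing convention but not stated as a hypothesis.

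Granted that, for \eqref{eq:prod1} I would use the series representation \eqref{id:error},
\[\mathcal{E}_\beta (\overline{f_t}, f_t) = \sum_{\alpha \in \NN^2} c_{\alpha,t} X_\alpha, \qquad c_{\alpha,t} = (\overline{f_t}|e_{\alpha_1})_\beta (f_t|e_{\alpha_2})_\beta,\]
which converges in $L^2(\Omega, \mathcal{F}, \PP)$ because $\sum_\alpha |c_{\alpha,t}|^2 = \norm{\overline{f_t}}_\beta^2 \norm{f_t}_\beta^2 < \infty$. Writing the square as a double series and interchanging it with the expectation gives
\[\EE \, \mathcal{E}_\beta (\overline{f_t}, f_t)^2 = \sum_{\alpha, \gamma \in \NN^2} c_{\alpha,t} \, c_{\gamma,t} \, \EE (X_\alpha X_\gamma) = 0.\]
The prefactor $t^{-2 \lambda m}$ is harmless, which proves \eqref{eq:prod1}. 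The interchange is justified in the usual way via truncations $\mathcal{E}_N = \sum_{|\alpha| \leq N} c_{\alpha,t} X_\alpha$ that converge to $\mathcal{E}_\beta (\overline{f_t}, f_t)$ in $L^2(\Omega)$, hence their squares converge in $L^1(\Omega)$.

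For \eqref{eq:prod2}, I would square the time-average, invoke Fubini to bring $\EE$ inside the double integral over $Q_T = [T,2T] \times [T,2T]$, and reduce the statement to
\[\EE \bigl[ \mathcal{E}_\beta (\overline{f_t}, f_t) \, \mathcal{E}_\beta (\overline{f_s}, f_s) \bigr] = 0 \quad \textnormal{for all } (t,s) \in Q_T,\]
which follows by the very same expansion-and-independence argument applied with distinct parameters $t$ and $s$. Justifying the use of Fubini is straightforward: by \eqref{eq:varianza}, \eqref{eq:sob_norm_ft} and Cauchy--Schwarz,
\[\EE \bigl| \mathcal{E}_\beta (\overline{f_t}, f_t) \, \mathcal{E}_\beta (\overline{f_s}, f_s) \bigr| \leq \norm{f_t}_\beta^2 \, \norm{f_s}_\beta^2 \lesssim T^{4 \lambda \beta},\]
which, multiplied by $T^{-2-2 \lambda m}$, is integrable on the bounded square $Q_T$.

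No step is substantively difficult; the only conceptual subtlety is the vanishing pseudo-covariance $\EE X_\alpha^2 = 0$, which is implicit in the phrase ``complex Gaussian'' rather than stated as part of \eqref{eq:complexGaussian}. Once that convention is made explicit, the lemma is a routine consequence of bilinearity and independence, and in particular it requires none of the oscillatory-integral analysis that was needed in \Cref{lemma:ergodic}.
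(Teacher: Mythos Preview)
Your proof is correct, but it follows a genuinely different route from the paper's. The paper does not invoke the circularity $\EE X_\alpha^2 = 0$ at all; instead it uses the white-noise isometry to write
\[
\EE\bigl[\mathcal{E}_\beta(\overline{f_t},f_t)\,\mathcal{E}_\beta(\overline{f_s},f_s)\bigr] = (f_t|\overline{f_s})_\beta\,(\overline{f_t}|f_s)_\beta,
\]
and then shows that $(\overline{f_t}|f_s)_\beta = 0$ by a Fourier-support argument specific to the wave packets: $\widehat{\overline{f_t}}$ is supported in a ball of radius $t$ centred at $-t^\lambda\xi_0$, while $\widehat{f_s}$ is supported in a ball of radius $s$ centred at $+s^\lambda\xi_0$, and for $s,t>1$, $|\xi_0|=1$, $\lambda>1$ these balls are disjoint. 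Your approach is more elementary and strictly more general---it yields $\EE(\WW h)^2 = 0$ for \emph{every} $h\in\mathcal{H}$, so the conclusion holds for arbitrary states, not just wave packets---at the price of relying on a convention (proper/circular complex Gaussians) that the paper never makes explicit. The paper's approach, by contrast, actually exploits the frequency localisation of the $f_t$; this is why the hypotheses $|\xi_0|=1$ and $\lambda>1$ appear in the lemma's statement even though they play no role in your argument.
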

\begin{proof}
We start with the proof of \eqref{eq:prod2}.  By  \eqref{def:error} and \eqref{id:isometry}
\begin{multline} \label{eq:prodH}
\EE \left( \mathcal  E_\beta ( \overline{f_t}, f_t) \mathcal  E_\beta ( \overline{f_s}, f_s)  \right) = \EE \left( \WW(\overline{f_t} \otimes {f_t} ) \WW( \overline{f_s} \otimes {f_s} )  \right) \\
  = ({f_t} \otimes \overline{f_t}|  \overline{f_s} \otimes{f_s} )_{\mathcal H} = ({f_t}|\overline{f_s} )_\beta (\overline{f_t}|{f_s} )_\beta.
 \end{multline}
On the one hand, using the notation of the \cref{lemma:ergodic}
\begin{multline} \label{eq:est_sim2}
\EE \left ( \frac{1}{T} \int_{T}^{2T} t^{- \lambda m} \mathcal  E_\beta(\overline{f_t}, f_t) \, \dd t \, \right)^2 \\
= \frac{1}{T^{2}} \int_{Q_T} t^{- \lambda m }s^{- \lambda m} \EE \left( \mathcal  E_\beta ( \overline{f_t}, f_t) \mathcal  E_\beta ( \overline{f_s}, f_s)  \right)  \, \dd (t,s) \\
= \frac{1}{T^{2}} \int_{Q_T} t^{- \lambda m } s^{- \lambda m} (\overline{f_t}|{f_s} )_\beta({f_t}|\overline{f_s} )_\beta  \, \dd (t,s),
\end{multline}
 Furthermore, note that 
\begin{equation} \label{eq:prodH2}
(\overline{f_t}|{f_s} )_\beta  =   t^{-d/2} s^{-d/2}   \int_{\RR^d}   \widehat{\chi}\left( \frac{-\xi-t^{\lambda}\xi_0}{t} \right )  \widehat{\chi}\left( \frac{\xi-s^{\lambda}\xi_0}{s} \right )  \br{\xi}^{2\beta} \, \dd \xi =0,
\end{equation}
since the supports of the two smooth cut-off functions do not overlap for any $s,t>1$, $|\xi_0| = 1$, and $1<\lambda<\infty$. Indeed, the function $\xi \mapsto \widehat{\chi}\big( -(\xi+t^{\lambda}\xi_0)/t \big) $  is supported in a ball of radius $t$ centered at $- t^\lambda \xi_0$ while the function $\xi \mapsto \widehat{\chi}\big( (\xi-s^{\lambda}\xi_0)/s \big)$ is supported in a ball of radius $s$ and center $s^\lambda \xi_0$. By \eqref{eq:est_sim2}, this proves \eqref{eq:prod2}. Finally, \eqref{eq:prod1} follows directly from \eqref{eq:prodH} and \eqref{eq:prodH2} with $s=t$.
\end{proof}
}

%

From \eqref{eq:error}, we know that
\[\EE |X_t|^2 \simeq t^{2\lambda(2\beta- m)},\]
hence, using this in \eqref{eq:dist}, there exists  $C>1$ so that
\begin{equation}
 e^{-c^2 C t^{2\lambda (m - 2\beta)}} \le \PP \{ |X_t| > c \} \le e^{-c^2 C^{-1} t^{2\lambda (m - 2\beta)}}.
\label{id:distribution_Xt}
\end{equation}
Thus, if $m < 2\beta$, the previous quantity tends to $1$ as $t$ goes to infinity. On the other hand, if $m = 2\beta$, the previous quantity also grows as $c$ becomes smaller. These two facts imply \ref{eq:noconv1}. If we turn now our attention to $Y_T$, we have by
the \cref{lemma:ergodic} that
\[\EE |Y_T|^2 \simeq T^{2\lambda (2\beta- \frac{1}{2} -m)  +1 }.\]
Once again,
there exists $C>1$ so that
\begin{equation}
 e^{-c^2 C T^{-2\lambda (2\beta- \frac{1}{2} -m)  -1 }} \le \PP \{ |Y_T| > c \} \le e^{-c^2 C^{-1} T^{-2\lambda (2\beta- \frac{1}{2} -m)  -1 }}.
\label{id:distribution_YT}
\end{equation}
Thus, if $m \leq 2\beta - 1/2$, the quantity above tends to $1$ as $T$ goes to infinity, which proves \ref{eq:noconv2}. This concludes the proof of \Cref{thm_main_2}.
\end{proof}

Finally, we prove \Cref{thm_main_3}, which provide the rate of convergence in probability of the limits of \Cref{thm_main_1}.

\begin{proof}[Proof of \Cref{thm_main_3}]
Start by proving \ref{in:non-averaging}. Using the notation of $P_j$ given  in \eqref{id:Pj}, we have, by \eqref{lim:Pj} and the choice of $\lambda_j$, that
\[|N^{- \lambda m_j} \mathcal N_{\beta,P_j}(f_{N,\lambda},f_{N,\lambda})    - a_j(x_0,\xi_0) | \leq O (N^{-1}) + |N^{-\lambda_j m_j} \mathcal{E}_\beta (\overline{f_{N, \lambda_j}}, f_{N, \lambda_j})|.\]
There exists $C^\prime$ such that $N \geq C^\prime/\varepsilon$,
\begin{gather*}
\PP  \{  |N^{- \lambda m_j} \mathcal N_{\beta,P_j}(f_{N,\lambda},f_{N,\lambda})    - a_j(x_0,\xi_0) | \leq \varepsilon  \} \geq \PP \{ |N^{-\lambda_j m_j} \mathcal{E}_\beta (\overline{f_{N, \lambda_j}}, f_{N, \lambda_j}) | \leq \varepsilon/2  \} \\
= 1 - \PP \{ |N^{-\lambda_j m_j} \mathcal{E}_\beta (\overline{f_{N, \lambda_j}}, f_{N, \lambda_j}) | > \varepsilon/2  \}.
\end{gather*}
By  \eqref{eq:XY} and \eqref{id:distribution_Xt}, we know that
\[  \PP \{ |N^{-\lambda_j m_j} \mathcal{E}_\beta (\overline{f_{N, \lambda_j}}, f_{N, \lambda_j}) | > \varepsilon/2  \} \le  e^{-(\varepsilon^2/ 4C) N^{2\lambda_j (m_j - 2\beta)}}. \]
From this point a simple computation allows to find $N_0$. This proves \ref{in:non-averaging}.

Next we turn our attention to \ref{in:averaging}. Using the notation of $P_j$ given  in \eqref{id:Pj}, we have, by \eqref{lim:Pj} and the choice of $\lambda_j$, that
\begin{multline*}
\left | \frac{1}{N} \int_{N}^{2N}   t^{- \lambda m_j} \mathcal N_{\beta,P_j}(f_{t,\lambda},f_{t,\lambda})  \,  \dd t  - a_j(x_0,\xi_0) \right | \\
 = O (N^{-1}) + \left | \frac{1}{N} \int_{N}^{2N}   t^{- \lambda m_j} \mathcal E_\beta (\overline{f_{t,\lambda}},f_{t,\lambda})  \,  \dd t \right |.
\end{multline*}
Arguing as in \ref{in:non-averaging}, we have that
\begin{multline*}
\PP \left\{ \left | \frac{1}{N} \int_{N}^{2N}   t^{- \lambda m_j} \mathcal N_{\beta,P_j}(f_{t,\lambda},f_{t,\lambda})  \,  \dd t  - a_j(x_0,\xi_0) \right | \leq \varepsilon
\right\} \\
\geq 1 - \PP \left\{\left | \frac{1}{N} \int_{N}^{2N}   t^{- \lambda m_j} \mathcal E_\beta (\overline{f_{t,\lambda}},f_{t,\lambda})  \,  \dd t \right | > \varepsilon / 2
\right\}.
\end{multline*}
By \eqref{eq:XY} and \eqref{id:distribution_YT}, we know that
\[
 \PP \left\{\left | \frac{1}{N} \int_{N}^{2N}   t^{- \lambda m_j} \mathcal E_\beta (\overline{f_{t,\lambda}},f_{t,\lambda})  \,  \dd t \right | > \varepsilon / 2
\right\} \le  e^{-(\varepsilon^2/ 4C) N^{-2\lambda (2\beta- \frac{1}{2} -m)  -1 }}.
 \]
Obtaining now $N_0$ is a simple computation, which proves \ref{in:averaging}. This concludes the proof of \Cref{thm_main_3}.
\end{proof}

\bibliographystyle{myplainurl}

\bibliography{references_rec_symbolo}

\end{document}